\newcommand{\dN}{\mathbb{N}}
\newcommand{\dR}{\mathbb{R}}
\newcommand{\calB}{\mathcal{B}}
\newcommand{\calC}{\mathcal{C}}
\newcommand{\calF}{\mathcal{F}}
\newcommand{\calG}{\mathcal{G}}
\newcommand{\calGM}{\mathcal{G}_{\mathrm{M}}}
\newcommand{\calGA}{\mathcal{G}_{\mathrm{A}}}
\newcommand{\calGS}{\mathcal{G}_{\mathrm{S}}}
\newcommand{\calGWS}{\mathcal{G}_{\mathrm{WS}}}
\newcommand{\calGTM}{\mathcal{G}_{\mathrm{TM}}}
\newcommand{\calGZM}{\mathcal{G}_{\mathrm{ZM}}}
\newcommand{\calGE}{\mathcal{G}_{\mathrm{E}}}
\newcommand{\calGTB}{\mathcal{G}_{\mathrm{TB}}}
\newcommand{\calGB}{\mathcal{G}_{\mathrm{B}}}
\newcommand{\calK}{\mathcal{K}}
\newcommand{\calN}{\mathcal{N}}
\newcommand{\calS}{\mathcal{S}}
\newcommand{\calW}{\mathcal{W}}
\newcommand{\frakP}{\mathfrak{P}}
\renewcommand{\phi}{\varphi}
\renewcommand{\epsilon}{\varepsilon}
\DeclareMathOperator{\conv}{conv}
\DeclareMathOperator{\selectope}{sel}
\DeclareMathOperator{\Min}{Min}
\begin{document}
\title{On Decomposition of Solutions for Coalitional Games\thanks{This research has been supported by the project RCI (CZ.02.1.01/0.0/0.0/16 019/0000765) and by the grant GA\v{C}R n. 16-12010S.}
}


\author{Tom\'{a}\v{s} Kroupa}


\institute{T. Kroupa \at Department of Computer Science, Faculty of Electrical Engineering Czech Technical University in Prague \email{tomas.kroupa@fel.cvut.cz} \and The Czech Academy of Sciences, Institute of Information Theory and Automation}

\date{Received: date / Accepted: date}

\maketitle

\begin{abstract}
A solution concept on a class of transferable utility coalitional games is a multifunction satisfying given criteria of economic rationality. Every solution associates a set of payoff allocations with a coalitional game. This general definition specializes to a number of well-known concepts such as the core, Shapley value, nucleolus etc. In this note it is shown that in many cases a solution factors through a set of games whose members can be viewed as elementary building blocks for the solution. Two factoring maps have a very simply structure. The first decomposes a game into its elementary components and the second one combines the output of the first map into the respective solution outcome. The decomposition is then studied mainly for certain polyhedral cones of zero-normalized games. 
\keywords{coalitional game \and solution concept \and polyhedral cone}
\subclass{91A12}
\end{abstract}

\section{Introduction}
The main goal of this paper is to initiate research into the structure of certain solution concepts for coalitional games. Our setting is that of coalitional games with transferable utility and a finite player set. A solution on a class of games is a function that associates a set of feasible payoff allocations with a game from the class. See the book \cite{PelegSudholter07} for details on cooperative games and an in-depth exposition of key solution concepts. 

Section \ref{sec:coalgames} formally introduces basic notions and elementary results about coalitional games and their special families. In particular, it is emphasized that zero-normalization enables us to transform certain polyhedral cones of games into pointed polyhedral cones, which contain extreme rays and are thus conic hulls of their finite sets of generators.

The decomposition for solutions is defined in Section \ref{sec:dec}. The main idea is captured by the commutative diagram \eqref{diagram}, which is often repeated in cases of particular solution concepts for the sake of clarity. It is then shown that several important solutions allow for such a factorization. Specifically, in the rest of Section \ref{sec:dec} it is proved that every probabilistic value, nucleolus, Weber set, and selectope decompose in the sense of diagram \eqref{diagram}. There are other solutions, however, for which the sought decomposition is not natural or it may not exist at all. As examples of such solutions we can mention Von Neumann--Morgenstern stable sets or bargaining sets. 

Interestingly enough, the core solution factorizes when it is restricted onto a suitable cone of games. This is explained in Section \ref{sec:add} whose results apply to additive solutions defined on the linear space of all games or on the cones of games containing additive games as the lineality space. Such cones of games are, for example, the cone of supermodular games, exact games, and totally balanced games, respectively. The existence of decomposition then amounts to the statement that the corresponding solutions are completely determined by the solution map restricted to a finite set of generators; see Proposition \ref{pro:lin} and Theorem \ref{thm:cone}. For example, it is proved in \cite{StudenyKroupa16}  that the cores of supermodular games coincide with the class of convex polytopes known as generalized permutohedra, and the cores associated with extreme supermodular games are precisely the so-called indecomposable generalized permutohedra. Recent progresses in the study of the cone of balanced games make it possible to frame an analogous question for the generators of the ``maximal linear regions" where the core is additive and positively homogeneous; see \cite{Vermeulen18} for details. 
Whereas the core solution is additive when restricted to particular classes of games, there exists its decomposition \eqref{diagram} using lattice operations only; see Section \ref{subsec}. Inspired by the max-convex representation of coalitional games \cite{llerenaRafels06}, we show that the cores of weakly superadditive games are fairly special convex polytopes; this is a consequence of Theorem \ref{thm:WSmain}.

\section{Coalitional Games} \label{sec:coalgames}

We use the standard notions and results from cooperative game theory; see \cite{PelegSudholter07}. Let $N= \{1,\dots,n\}$ be a finite set of \emph{players} for some integer $n\geq 2$ and let $\frakP(N)$ be the powerset of $N$. Any set  $A\in \frakP(N)$ is called a \emph{coalition}. A~\emph{(transferable-utility coalitional) game} is a  function $v\colon \frakP(N)\to \dR$ satisfying $v(\emptyset)=0$. By $\calG(N)$ we denote the linear space of all games with the player set $N$. The dimension of $\calG(N)$ is $2^n-1$. For any nonempty coalition $A\subseteq N$, the restriction of $v\in\calG(N)$ to $\frakP(A)$ is called a \emph{subgame} of $v$.
 A game $v\in\calG(N)$ is called 
\begin{itemize}
\item \emph{weakly superadditive} whenever 
$v(A\cup \{i\})\geq v(A)+v(\{i\})$ holds for all $A\subseteq N$ and every $i\in N\setminus A$,
\item \emph{monotone} if $v(A)\leq v(B)$ for all  $A\subseteq B\subseteq N$,
\item \emph{supermodular} if $v(A\cup B)+v(A\cap B)\geq v(A)+v(B)$ for all $A,B\subseteq N$,
\item \emph{totally monotone} if for any $k\geq 2$ and all $A_1,\dots,A_k\subseteq N$,
\[
v(\bigcup_{i=1}^k A_i) \geq \sum_{\substack{I \subseteq \{1,\dots,k\}\\ I\neq \emptyset}} (-1)^{|I|+1}\cdot  v(\bigcap_{i\in I}A_i),
\]
\item \emph{zero-normalized} if $v(\{i\})=0$ for all $i\in N$,
\item \emph{zero-monotone} if it is zero-normalized and monotone,
\item \emph{additive} if $v(A\cup B)=v(A)+v(B)$ for all $A,B\subseteq N$ with $A\cap B=\emptyset$.
\end{itemize}
By $\mathsf{0}$ we denote the game in $\calG(N)$ that is identically equal to $0$.
For any nonempty coalition $A\subseteq N$, a \emph{unanimity game (on $A$)} is given   by
\[
u_A(B)= \begin{cases}
1 & A\subseteq B, \\
0 & \text{otherwise,} \end{cases} \qquad B\subseteq N.
\]
The following notation will be used for particular sets of games:
\begin{align*}
\calGWS(N) & = \{v\in \calG(N) \mid \text{$v$ is weakly superadditive}\} \\
\calGM(N) & = \{v\in \calG(N) \mid \text{$v$ is monotone}\} \\
\calGS(N) & = \{v\in \calG(N) \mid \text{$v$ is supermodular}\} \\
\calGTM(N) & = \{v\in \calG(N) \mid \text{$v$ is totally monotone}\} \\
\calGZM(N) & = \{v\in \calG(N) \mid \text{$v$ is zero-monotone}\} \\
\calGA(N) &=  \{v\in \calG(N) \mid \text{$v$ is additive}\} \\
\calG^0(N) & = \{v\in \calG(N) \mid \text{$v$ is zero-normalized}\} \\
\calG_{\star}^0(N) & = \calG_{\star}(N)\cap \calG^0(N), \text{where $\star\in \{\mathrm{WS},\dots\}$}  
\end{align*}
Many of the sets of games above are in fact polyhedral cones. For all the unexplained notions concerning convexity and polyhedral sets we refer the reader to \cite{BachemKern92,Ewald96}. Let $\star\in \{\mathrm{WS},\mathrm{S},\mathrm{TM}\}$. 
It follows immediately from the definitions that $\calG_{\star}(N)$ is a polyhedral cone, $\calGA(N)$ is a linear space, and $\calGA(N) \subseteq \calG_{\star}(N)$. 
  Since $\calGA(N)=\calG_{\star}(N)\cap -\calG_{\star}(N)$, the linear space $\calGA(N)$ is the lineality space of $\calG_{\star}(N)$. Moreover, the linear space $\calG^0(N)$ is complementary to $\calGA(N)$ in $\calG(N)$, that is, $\calG^0(N)\cap \calGA(N)=\{\mathsf{0}\}$ and $\calG^0(N) + \calGA(N)=\calG(N)$. Thus, for any game $v\in \calG(N)$, there exist unique $w\in \calG^0(N)$ and $m\in \calGA(N)$ such that $v=w+m$. Indeed, it suffices to put $w= \hat{v}$ and $m= v-\hat{v}$, where 
\begin{equation} \label{eq:zeronorm}
\hat{v}(A) = v(A) - \sum_{i\in A}v(\{i\}), \quad A\subseteq N,
\end{equation}
and observe that $\hat{v}\in \calG^0(N)$ and $v-\hat{v}\in \calGA(N)$.

It follows from the above considerations that $\calG_{\star}^0(N)$
 is a pointed polyhedral cone and we obtain the decomposition
 \begin{equation} \label{eq:dirdecomp}
\calG_{\star}(N)=\calG_{\star}^0(N) + \calGA(N), \qquad \star\in \{\mathrm{WS},\mathrm{S},\mathrm{TM}\}.
\end{equation}
This means that for any game $v\in\calG_{\star}(N)$ there exist unique $w\in \calG_{\star}^0(N)$ and $m\in \calGA(N)$ such that $v=w+m$, where necessarily $w=\hat{v}$ and $m=v-\hat{v}$.

\section{Decomposition of Solutions}\label{sec:dec}
A vector $x = (x_1,\dots,x_n)\in \dR^n$ is called a \emph{payoff allocation}. The total payoff allocation assigned to a nonempty coalition $A\subseteq N$ is the number $x(A)= \sum_{i\in A}x_i$ and we further define $x(\emptyset)= 0$. There is an obvious linear isomorphism
\begin{equation}\label{liniso}
e\colon \dR^n\to  \calGA(N)
\end{equation}
such that  $e(x)= m_x\in\calGA(N)$, where $m_x$ is the additive game given by
\begin{equation}\label{def:add}
m_x (A) = x(A),\qquad A\subseteq N.
\end{equation}
Note that  the inverse linear mapping $e^{-1}\colon \calGA(N)\to \dR^n$ just restricts an additive game $v\in \calGA(N)$ to the atoms of $\frakP(N)$, 
\begin{equation}\label{def:addinv}
e^{-1}(v) = (v(\{i\}))_{i\in N}.
\end{equation}

We define solutions of coalitional games. Let $\frakP(\dR^n)$ be the powerset of $\dR^n$.   By $\Gamma(N)$ we denote an arbitrary nonempty subset of $\calG(N)$. A \emph{solution} is a set-valued mapping 
\[
\sigma\colon \Gamma(N)\to \frakP(\dR^n).
\]
Thus, every element $x\in \sigma(v)$, where $v\in\Gamma(N)$, is considered as a final payoff allocation in the game $v$. We remark that every solution $\sigma$ according to \cite[Definition 2.3.1]{PelegSudholter07} must  also satisfy \emph{feasibility}, that is,
\begin{equation}\label{cond:fea}
 \sigma(v)\subseteq \{x\in\dR^n \mid x(N)\leq v(N)\}, \qquad v\in \Gamma(N).
 \end{equation}
Most of the solutions discussed in further sections of this paper meet the condition \eqref{cond:fea}.
Additional assumptions on $\sigma$ determine various solution concepts, such as the core mapping, selectope, nucleolus, Weber set etc. By $X + Y$ we denote the \emph{Minkowski sum} of sets $X$ and $Y$  in $\dR^n$ defined by
\[
X+Y = \{x+y \mid x\in X, y\in X\}.
\]
For any $c\in\dR$, let $c\cdot X= \{cx \mid x\in X\}$. When $X$ is convex, there is no ambiguity in writing $cX$,  where $c\in\dN$, since in this case we have
\[
c\cdot X=\underbrace{X+\dots+X}_{cX}.
\]

 We say that a solution $\sigma\colon \Gamma(N)\to \frakP(\dR^n)$ is 
\begin{itemize}
\item \emph{nonempty} if $\sigma(v)\neq \emptyset$ for all $v\in \Gamma(N)$,
\item \emph{single-valued} if $\sigma(v)$ is a singleton for all $v\in \Gamma(N)$,
\item \emph{superadditive} if $\sigma(v+w)\supseteq \sigma(v) + \sigma(w)$ for all $v,w\in \Gamma(N)$,
\item \emph{additive} if $\sigma(v+w)=\sigma(v) + \sigma(w)$ for all $v,w\in \Gamma(N)$,
\item \emph{covariant under strategic equivalence} if, for every $v\in\Gamma(N)$, $c>0$, and every $w\in\calGA(N)$, we have $\sigma(cv+w)=c\cdot \sigma(v) + \{e^{-1}(w)\}$, where $e^{-1}$ is as in \eqref{def:addinv}, 
\item \emph{positively homogeneous} if $\sigma(cv)=c\cdot \sigma(v)$ for all $v\in \Gamma(N)$ and every $c>0$,
\item \emph{positive} if, for every nonnegative game $v\in\calGTM(N)$, any $x\in \sigma(v)$ has only nonnegative coordinates.
\end{itemize}
\begin{remark}
Let $\calGTM^+(N)$ be the family of all nonnegative totally monotone games. Note that 
\[
\calGTM^+(N)=\calGTM(N) \cap \calGM(N).
\]
Then $\calGTM^+(N)$ forms a pointed polyhedral cone, which plays a crucial role for the representation of games and the respective Choquet integrals; see \cite{GilboaSchmeidler94}. In particular, letting $\calGTM^+(N)$ be the positive cone of $\calG(N)$, we obtain a partial order $\leqslant$ on $\calG(N)$ such that $v\leqslant w$ whenever $w-v\in\calGTM^+(N)$. Thus, $\calG(N)$ becomes a Riesz space whose order is $\leqslant$, and for any game $v\in\calG(N)$ there exist uniquely determined games $w_1,w_2\in\calGTM^+(N)$ such that $v=w_1-w_2$. With these conventions in mind, the definition of a positive solution can be rephrased as follows: For any $\mathsf{0}\leqslant  v\in \calG(N)$, every vector $x\in \sigma(v)$ satisfies $0\leq x$, where $\leq$ is the pointwise order on $\dR^n$.

\end{remark}

The main goal of this paper is to discuss the cases when a given solution $\sigma\colon \Gamma(N)\to \dR^n$ can be decomposed as follows. 
\begin{equation}\label{diagram}
   \begin{tikzcd}
   \Gamma(N) \arrow[rd,"\sigma"] \arrow[r, "\tau"] & \Omega(N)^Z \arrow[d,"\alpha"] \\
   & \frakP(\dR^n)
     \end{tikzcd}
\end{equation}
In the above diagram $Z$ is a nonempty set and $\emptyset\neq \Omega(N)\subseteq \calG(N)$. Since \eqref{diagram} commutes, $\sigma=\alpha \circ \tau$, where $\tau\colon \Gamma(N)\to \Omega(N)^Z$ and $\alpha \colon \Omega(N)^Z\to \frakP(\dR^n)$ are some maps. The interpretation is that the solution $\sigma$ factors through the set $\Omega(N)^Z$, whose elements  are maps $\omega\colon Z\to \Omega(N)$. We will use the notation $\omega_z$ to denote the image of $z\in Z$ under $\omega$. Hence, the map $\omega$ selects a game $\omega_z\in  \Omega(N)$, for every $z\in Z$. The point is that the games in $\Omega(N)$ can be considered as elementary building blocks for the construction of the solution $\sigma$. The map $\alpha$ then aggregates the games $\omega_z$ into a set of payoff allocations $\alpha(\omega)=\alpha(\tau(v))=\sigma(v)$, for every $v\in \Gamma(N)$. The dependence of $Z$, $\Omega(N)$, $\tau$, and $\alpha$ on the solution mapping $\sigma$ is tacitly understood.

In the sequel we are interested only in the decompositions of $\sigma$ that are non-trivial in the following sense. Put $\Omega(N)= \Gamma(N)$ and  $Z= \{z\}$. Then the set of all maps $Z\to \Omega(N)$ is in bijection with $\Omega(N)$, so it is sensible to identify a map $\omega\colon Z\to \Omega(N)$ with the unique element $v\in \Omega(N)$ in the range of $\omega$. Hence, we can define $\tau(v)= v$ and $\alpha(v)= \sigma(v)$, which makes the diagram \eqref{diagram} commute trivially. In all other cases we will say that $\sigma$ has a \emph{non-trivial decomposition}.

In the rest of this section we will show that many existing solution concepts have a non-trivial decomposition \eqref{diagram}. Moreover, such decompositions are often easily constructed from the corresponding definitions of those solutions.

\subsection{Probabilistic values}
A mapping $\psi=(\psi_1,\dots,\psi_n)\colon \calG(N)\to\dR^n$ is a \emph{probabilistic value} \cite{Weber88} if for each player $i\in N$ there exists a probability measure $p_i\colon \frakP(N\setminus \{i\})\to [0,1]$ such that
\[
\psi_i(v) = \sum_{A\subseteq N\setminus\{i\}} p_i(A)\cdot (v(A\cup \{i\})-v(A)), \qquad v\in\calG(N).
\]
For all $v\in\calG(N)$, we will denote the marginal contribution of player $i\in N$ to an arbitrary coalition $A\subseteq N$ by $D^v_i(A)= v(A\cup \{i\})-v(A)$. Note that $A$ is allowed to contain the player $i$. This yields a game $D_i^v\in\calG(N)$ and a map $D^v\colon i\in N\mapsto D_i^v$.
In order to  see that the solution $\psi$ decomposes non-trivially, it suffices to put $\Gamma(N)=\Omega(N)= \calG(N)$, $Z= N$, and $\tau(v)= D^v$ for all $v\in\calG(N)$. Let $\alpha\colon \calG(N)^N\to \dR^n$ be defined by
\[
\alpha_i(\omega) =  \sum_{A\subseteq N\setminus\{i\}} p_i(A) \cdot \omega_i(A),\qquad \omega=(\omega_j)_{j\in N}\in \calG(N)^N, \,i\in N.
\]
Then the diagram \eqref{diagram} commutes with these definitions, since for all $v\in\calG(N)$ and every $i\in N$,
\[
\alpha_i(\tau(v))=\alpha_i(D^v)= \sum_{A\subseteq N\setminus\{i\}} p_i(A) \cdot D^v_i(A) =\psi_i(v).
\]

\subsection{Nucleolus}
Let $K$ be a nonempty compact convex subset of $\dR^n$.
For any $x\in K$ and every $v\in\calG(N)$, let $\theta^x_v$ be a game such that $\theta^x_v(A)= v(A)-x(A)$, where $A\subseteq N$. Further, define a vector $\theta(x)= (\theta_{i}(x))_{i=1,\dots,2^n}$ with coordinates $\theta_{i}(x)= \theta^x_v(A_i)$, where $A_1,\dotsc,A_{2^n}$ is an enumeration of all the subsets of $N$ satisfying the condition:
\[
\text{For all $i,j 
=1,\dots,2^n$, if $i<j$, then $\theta^x_v(A_i)\geq \theta^x_v(A_j)$.}
\]
Let $\preceq$ denote the lexicographic order on $\dR^{2^n}$. The \emph{nucleolus} of $v$ with respect to $K$ is the set
\[
\calN(v,K) = \{x\in K \mid \theta(x)\preceq \theta(y), \text{ for all $y\in K$}\}.
\]
Since $K\neq \emptyset$ is compact convex, the nucleolus $\calN(v,K)$ is nonempty and single-valued for every $v\in\calG(N)$; see  \cite{SchmeidlerNucl69} for further details.

We will show that \eqref{diagram} commutes with the data defined as follows. Let $\Gamma(N)=\Omega(N)=\calG(N)$, $Z= K$. For any $v\in \calG(N)$, let $\theta_v$ be the mapping $x\in K\mapsto \theta_v^x\in\calG(N)$ and put
\[
\tau(v)= \theta_v, \qquad v\in\calG(N).
\]
Let $\omega$ be any mapping $K\to \calG(N)$ and the image of $x\in K$ under $\omega$ be now denoted by $\omega^x$. We define a vector $\omega(x)\in \dR^{2^n}$ using $\omega^x$ completely analogously to the definition of $\theta(x)$ using $\theta^x_v$ above. Let $\Min S$ be the lexicographic minimum of a  set $\emptyset \neq S\subseteq \dR^{2^n}$. Put
\[
\alpha(\omega)= \Min \{\omega(x) \mid x\in K \}, \qquad \omega \in \calG(N)^K.
\]
Thus, for every $v\in \calG(N)$,
\[
\alpha(\tau(v)) = \alpha(\theta_v)= \Min \{\theta(x) \mid x\in K\}=\calN(v,K),
\]
which means that \eqref{diagram} is commutative using the definitions above.

\subsection{Weber set}
Let $\Pi(N)$ denote the set of all permutations $\pi\colon N\to N$.
A~\emph{marginal vector} of a game $v\in\calG(N)$ with respect to  $\pi\in\Pi(N)$ is the payoff allocation $x^{v,\pi}\in\dR^n$ with coordinates
\begin{equation}\label{def:MargVec}
x^{v,\pi}_i= v\left(\bigcup_{j\leq \pi^{-1}(i)}\{\pi(j)\}\right)-v\left(\bigcup_{j< \pi^{-1}(i)}\{\pi(j)\}\right), \qquad i\in N.
\end{equation}
The \emph{Weber set} of a game $v\in\calG(N)$ is the convex hull of all the marginal vectors of $v$,
\[
\calW(v)=\conv \{x^{v,\pi}\mid \pi \in \Pi(N)\}.
\]

Let $x^v\colon \Pi(N)\to \dR^n$ be the \emph{payoff-array transformation}, which was considered in \cite{StudenyKroupa16}. Precisely, $x^v$ is a mapping $\pi \in \Pi(N)\mapsto x^{v,\pi}\in \dR^n$. Put $\Gamma(N)= \calG(N)$, $\Omega(N)= \calGA(N)$, $Z= \Pi(N)$, and $\tau(v)= e\circ x^v$ for every $v\in \calG(N)$, where $e$ is as in \eqref{liniso}. Define  
\[
\alpha(\omega) = \conv \{e^{-1}(\omega_{\pi})\mid \pi \in\Pi(N) \},\qquad \omega\in \calGA(N)^{\Pi(N)},
\]
and observe that the diagram \eqref{diagram:Weber} commutes with these definitions since
\[
\alpha(\tau(v))=\alpha(e\circ x^v)=\conv \{e^{-1}(e(x^v_{\pi})) \mid \pi\in \Pi(N)\}=\calW(v), \qquad v\in\calG(N).
\]
\begin{equation}\label{diagram:Weber}
   \begin{tikzcd}
   \calG(N) \arrow[rd,"\calW"] \arrow[r, "\tau"] & \calGA(N)^{\Pi(N)} \arrow[d,"\alpha"] \\
   & \frakP(\dR^n)
     \end{tikzcd}
\end{equation}

\subsection{Selectope}
The selectope contains all possible reasonable distributions of Harsanyi dividends among the players; see \cite{DerksHallerPeters00}, for example. Specifically, it is constructed as follows. A mapping $a\colon \frakP(N)\setminus\{\emptyset\}\to N$ such that $a(A)\in A$ is called a \emph{selector}. Let $\calS(N)$ be the set of all selectors. The \emph{selector value} of a game $v\in\calG(N)$ corresponding to $a\in\calS(N)$ is a vector $\phi^{a}(v)\in\dR^n$ with coordinates
\[
\phi_i^{a}(v)= \sum_{\substack{A\in\frakP(N)\setminus\{\emptyset\} \\ a(A)=i}} m^v(A), \qquad i\in N,
\]
where $m^v$ is the M\"obius transform (Harsanyi dividend) of $v$ given by
\[
m^v(A)=\sum_{B\subseteq A} (-1)^{|A\setminus B|} \cdot v(B), \qquad A\subseteq N.
\]
The \emph{selectope} of $v$ is then the set
\[
\selectope(v) = \conv \{\phi^{a}(v)\mid a \in \calS(N)\}.
\]
It is clear that $\selectope(v)\neq \emptyset$ for all games $v\in \calG(N)$.

The definition of solution $\selectope$ on $\calG(N)$ is in fact captured by the diagram \eqref{diagram}. Indeed, put $\Gamma(N)= \calG(N)$, $\Omega(N)= \calGA(N)$, and $Z= \calS(N)$. For any game $v\in \calG(N)$,  let $\phi(v)\colon \calS(N)\to \dR^n$ be the map $a\in\calS(N)\mapsto \phi^{a}(v)$. Define the map
\[
\tau\colon \calG(N) \to \calGA(N)^{\calS(N)} 
\]
 as $\tau(v)= e\circ \phi(v)$, where $e$ is defined by \eqref{liniso}.
Further, let $\omega\in \calGA(N)^{\calS(N)}$ and put
\[
\alpha(\omega) = \conv\{e^{-1}(\omega_a) \mid a\in\calS(N)\},
\]
where $\omega_a\in \calGA(N)$. Thus, for any $v\in\calG(N)$,
\[
\alpha(\tau(v)) =\alpha(e\circ \phi(v))=\conv\{\phi^{a}(v)  \mid a\in\calS(N)\} 
 = \selectope(v).
\]
This means that the diagram \eqref{diagram} commutes with the above definitions.

\section{Decomposition of Additive Solutions}\label{sec:add}
We will first look at a special case when the domain of a solution $\sigma$ is even a linear space. The typical example of a non-trivial factorization \eqref{diagram} of $\sigma$ is when $\sigma$ is an additive solution satisfying additional conditions on the linear space of all games $\calG(N)$.
\begin{proposition} \label{pro:lin}
Let $\calB= \{v_1,\dots,v_k\}$ be a basis of $\calG(N)$ and $\sigma$ be a nonempty solution on $\calG(N)$ that is superadditive, positive, and  such that $\sigma(\mathsf{0})=\{0\}$.

Put $Z= \{1,\dots,k\}$ and $\bar{\calB}= \{c_iv_i \mid v_i \in \calB, c_i\in \dR,i\in Z\}$.  For any $v\in\calG(N)$, define $\tau(v)= (a_i v_i)_{i\in Z}$, where $\sum_{i\in Z}a_i v_i$ is the unique linear combination expressing $v$, and

\[
\alpha(\omega) = \sum_{i\in Z}c_i\cdot \sigma(v_i), \qquad \omega=(c_i v_i)_{i\in Z},\, c_i\in \dR.
\]
Then this diagram commutes:
\begin{equation}\label{diagram:pro1}
   \begin{tikzcd}
   \calG(N) \arrow[rd,"\sigma"] \arrow[r, "\tau"] & \bar{\calB}^{Z} \arrow[d,"\alpha"] \\
   & \frakP(\dR^n)
     \end{tikzcd}
\end{equation}

\end{proposition}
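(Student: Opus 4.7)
The plan is to reduce the claim to showing that $\sigma$ coincides with an $\dR$-linear map $\calG(N) \to \dR^n$. Once linearity is established, for any $v = \sum_{i\in Z} a_i v_i$ one has
\[
\alpha(\tau(v)) = \alpha((a_i v_i)_{i\in Z}) = \sum_{i\in Z} a_i \cdot \sigma(v_i) = \sigma\!\left(\sum_{i\in Z} a_i v_i\right) = \sigma(v),
\]
and \eqref{diagram:pro1} commutes.

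The first step I would carry out is to promote superadditivity to genuine additivity by way of single-valuedness. Applying superadditivity to $v$ and $-v$ yields $\sigma(v) + \sigma(-v) \subseteq \sigma(\mathsf{0}) = \{0\}$, and since both sets are nonempty by hypothesis, any fixed $y\in\sigma(-v)$ forces every $x\in\sigma(v)$ to equal $-y$; thus $\sigma$ is single-valued, and I may treat it as an ordinary map $\calG(N) \to \dR^n$. The inclusion $\sigma(v+w) \supseteq \sigma(v) + \sigma(w)$ between singletons is then automatically an equality, so $\sigma$ is additive, and the usual inductive arguments upgrade this to $\dQ$-linearity: $\sigma(qv) = q\sigma(v)$ for all $q\in\dQ$ and $v\in\calG(N)$.

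To extend $\dQ$-linearity to $\dR$-linearity I would invoke positivity together with the Riesz-space structure recorded in the remark. Additivity and positivity imply monotonicity in the order $\leqslant$: if $v\leqslant w$ then $w - v \in \calGTM^+(N)$, whence $\sigma(w) - \sigma(v) = \sigma(w-v) \geq 0$ coordinatewise. For $v \in \calGTM^+(N)$ and $c \geq 0$, choosing rationals $q_1\leq c \leq q_2$ gives $q_1 v \leqslant cv \leqslant q_2 v$, hence $q_1 \sigma(v) \leq \sigma(cv) \leq q_2 \sigma(v)$; the bound $\sigma(v) \geq 0$ (again from positivity) then lets me send $q_1\uparrow c$ and $q_2\downarrow c$ to conclude $\sigma(cv) = c\sigma(v)$. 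For an arbitrary $v\in\calG(N)$ the Riesz structure supplies a decomposition $v = w_1 - w_2$ with $w_1, w_2 \in \calGTM^+(N)$, and additivity extends positive homogeneity to all of $\calG(N)$; negation handles $c<0$. The hard part will be precisely this $\dQ$-to-$\dR$ upgrade, since no continuity is built into the hypotheses — the rational-sandwich argument succeeds only because $\calGTM^+(N)$ endows $\calG(N)$ with a Riesz order, so that positivity functions as order-preservation.
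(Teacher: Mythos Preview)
Your proposal is correct and follows essentially the same route as the paper's own proof: derive single-valuedness from $\sigma(v)+\sigma(-v)\subseteq\sigma(\mathsf{0})=\{0\}$, upgrade superadditivity to additivity and then $\dQ$-linearity, and finally use positivity together with a rational-sandwich argument on $\calGTM^+(N)$ (plus the decomposition of an arbitrary game as a difference of nonnegative totally monotone games) to obtain $\dR$-linearity. The only cosmetic difference is that you phrase the sandwich step explicitly in terms of the Riesz order $\leqslant$ introduced in the paper's remark, whereas the paper writes the pointwise inequalities $pv\leq av\leq qv$ directly; the content is identical.
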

\begin{proof}
It is clear that the definition of $\tau$ is correct since $\calB$ is a basis. 
First, we will prove that $\sigma$ is a single-valued solution that is even a linear mapping $\calG(N)\to \dR^n$. It follows from the assumptions about $\sigma$ that
\[
\{0\}=\sigma(\mathsf{0}) \supseteq \sigma(v) + \sigma(-v), \qquad \text{for all $v\in\calG(N)$.}
\]
Since $\sigma(v)\neq \emptyset$, this implies that $\sigma(v)$ is necessarily a singleton. Then superadditivity says that, for all $v,w\in\calG(N)$ and some $x,y,z\in\dR^n$,
\[
\{x\}=\sigma(v+w) \supseteq \sigma(v)+ \sigma(w)=\{y\} + \{z\}=\{y+z\},
\]
 which means that $\sigma$ is an additive mapping $\calG(N)\to\dR^n$. Observe that additivity implies $\sigma(-v)=-\sigma(v)$. We will show that $\sigma$ is even a linear mapping. First, let $a$ and $b$ be integers with $b\neq 0$. Then
\[
 \sigma (\tfrac{a}{b}v)=a\sigma (\tfrac{1}{b}v)=\tfrac{a}{b}b\sigma (\tfrac{1}{b}v)=\tfrac{a}{b}\sigma(v).
\] 
Hence, $\sigma$ is a linear mapping when $\calG(N)$ is understood as a vector space over the field of rational numbers. Assume now that $v$ is a nonnegative totally monotone game. Let $a\in\dR$ and $p,q$ be rational numbers such that $p\leq a \leq q$. Since $v$ is nonnegative, we get $pv\leq av \leq qv$. Hence,
\[
p\sigma(v) = \sigma(pv) \leq \sigma(av) \leq \sigma(qv)=q\sigma(v).
\]
Since $\sigma$ is a positive mapping, we get $\sigma(av)=a\sigma(v)$. As every game in $\calG(N)$ is a difference of nonnegative totally monotone games, it follows that $\sigma$ is a linear mapping.

Now, let $v=\sum_{i\in Z}a_i v_i$ for necessarily unique $a_i\in \dR$ and $v_i\in \calB$, where $i\in Z$. Then, by linearity of $\sigma$,
\[
\alpha(\tau(v))=\alpha((a_i v_i)_{i\in Z})= \sum_{i\in Z}a_i\cdot \sigma(v_i)= \sum_{i\in Z} \sigma(a_iv_i)=\sigma(\sum_{i\in Z}a_i v_i)=\sigma(v).
\]
Hence, the diagram \eqref{diagram:pro1} commutes. \qed
\end{proof}

The seemingly weak conditions of Proposition \ref{pro:lin}  make $\sigma$ into a linear map. On top of that, since its domain is the entire linear space $\calG(N)$, the solution $\sigma$ is fully determined by the images $\sigma(v_i)$ of the basis elements $v_i\in\calB$. We will now consider a more natural situation when $\sigma$ is not considered on $\calG(N)$, but rather on a smaller set of games such as the polyhedral cones of games discussed in Section \ref{sec:coalgames}. Other examples of such classes of games include exact games and totally balanced games whose definititions are repeated below; see \cite{Schmeidler72} and \cite{KalaiZemel1982}, respectively.

For any game $v\in \calG(N)$, the set $\calC(v)$ of all allocations that are Pareto efficient and coalitionally rational is called the \emph{core} of $v$. Precisely,
\[
\calC(v)= \{x\in \dR^n \mid \text{$x(N)=v(N)$ and $x(A)\geq v(A)$ for all $A\subseteq N$}\}.
\]
A game $v\in\calG(N)$ is called
\begin{itemize}
\item \emph{balanced} if $\calC(v)\neq \emptyset$,
\item \emph{exact} if $v(A)=\min \{x(A) \mid x\in \calC(v)\}$ for all $A\subseteq N$,
\item \emph{totally balanced} if every subgame of $v$ is balanced.
\end{itemize}
Every exact game is totally balanced.
Put
\begin{align*}
\calGB(N) & = \{v\in \calG(N) \mid \text{$v$ is balanced}\} \\
\calGE(N) & = \{v\in \calG(N) \mid \text{$v$ is exact}\} \\
\calGTB(N) & = \{v\in \calG(N) \mid \text{$v$ is totally balanced}\} \\
\calG_{\star}^0(N) & = \calG_{\star}(N)\cap \calG^0(N), \text{where $\star\in \{\mathrm{B},\mathrm{E},\mathrm{TB}\}$}  
\end{align*}
It is wellknown that
\[
\calGTM(N)\subseteq \calGS(N)\subseteq \calGE(N) \subseteq \calGTB(N) \subseteq \calGB(N),
\]
where all the inclusions are proper for $n\geq 4$. The set of all balanced games $\calGB(N)$ is a polyhedral cone as a direct consequence of the Bondareva--Shapley theorem. The convex cones $\calGE(N)$ and $\calGTB(N)$ can be described by finitely-many linear inequalities too; see \cite{Csoka11balancedness} and \cite{Lohmann2012minimal}. None of those cones is pointed, however, since each of them contains the set of all additive games $\calGA(N)$ as the lineality space. Then the same technique as in Section \ref{sec:coalgames} can be employed to show that 
\begin{equation}\label{eq:decompcone}
\calG_{\star}(N)=\calG_{\star}^0(N) + \calGA(N),  
\end{equation}
where $\star\in \{\mathrm{B},\mathrm{E},\mathrm{TB}\}$ and $\calG_{\star}^0(N)$ is a pointed polyhedral cone. 

Minkowski's theorem says that $\calG_{\star}^0(N)$ is the conic hull of its (finitely-many) extreme rays. Needless to say, the expression of a given game as a conic combination of generators for the cone is usually highly non-unique. We will need the following result (Lemma \ref{lem:subdivision}), which makes it possible to achieve uniqueness of the conic representation with respect to a chosen subdivision of the cone into simplex cones; see \cite[Theorem III.1.12]{Ewald96}. We recall the needed terminology. A pointed polyhedral cone $C$ in a finite-dimensional vector space is a \emph{simplex cone} whenever the finite set of generators of its extreme rays is linearly independent. A \emph{polyhedral fan} is a finite set $\calF$ of polyhedral cones  satisfying the following conditions:
\begin{enumerate}
\item If $C\in\calF$ and $F$ is a face of $C$, then $F\in \calF$.
\item If $C,D\in\calF$, then the intersection $C\cap D$ is a face of both $C$ and $D$.
\end{enumerate}
A \emph{simplicial fan} is a polyhedral fan whose every cone is a simplex cone. The set $\bigcup_{C\in \calF} C$ is the \emph{support} of a polyhedral fan $\calF$.

\begin{lemma} \label{lem:subdivision}
 Let $C$ be a pointed polyhedral cone. Then there exists a simplicial fan $\calF= \{C_1,\dots,C_k\}$ such that:
\begin{enumerate}
\item The support of $\calF$ is $C$.
\item For any $i\in \{1,\dots,k\}$, there is a subset $S_i$ of the generators for $C$ such that $C_i$ is the conic hull of $S_i$.
\end{enumerate}
Moreover, for every $x\in C$, there exists a unique simplex cone $C_i\in \calF$ with $x\in C_i$ and such that $C_i$ has the smallest dimension among the cones $C_j\in \calF$ containing $x$. 
\end{lemma}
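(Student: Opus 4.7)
The plan is to deduce the lemma from the pulling (or placing) triangulation theorem \cite[Theorem III.1.12]{Ewald96} together with the defining axioms of a polyhedral fan; the crucial point is that this particular triangulation introduces no new rays. Since $C$ is pointed, Minkowski's theorem will supply a finite set of extreme-ray generators $\{g_1,\dots,g_m\}$ obtained by selecting one nonzero vector from each extreme ray of $C$, and I will fix this as the reference generator set throughout.

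Invoking the cited theorem then produces a simplicial fan $\calF$ whose support equals $C$ and whose one-dimensional cones are exactly the extreme rays of $C$. Condition (1) is immediate. For condition (2), every cone $C_i\in\calF$ is a simplex cone whose generating rays lie among the one-dimensional cones of $\calF$, hence among the extreme rays of $C$; thus $C_i$ is the conic hull of a (linearly independent) subset $S_i\subseteq\{g_1,\dots,g_m\}$, which is exactly what is asserted.

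For the final uniqueness statement I would proceed as follows. Given $x\in C$, the family $\calF_x=\{C_j\in\calF\mid x\in C_j\}$ is nonempty because $C$ is the support of $\calF$, so a cone of minimum dimension inside $\calF_x$ exists. If $C_i$ and $C_j$ were two distinct elements of $\calF_x$ of the same minimum dimension, the second fan axiom would force $F:=C_i\cap C_j$ to be a common face of both $C_i$ and $C_j$ and, by the first fan axiom, to belong to $\calF$; clearly $x\in F$. Since $C_i\neq C_j$, $F$ is a proper face of $C_i$, hence $\dim F<\dim C_i$, contradicting minimality. So the minimum-dimensional cone in $\calF_x$ is unique.

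The main obstacle is really the existence assertion itself: one needs a simplicial subdivision of $C$ that uses only the extreme rays of $C$, without introducing any auxiliary rays. This is precisely the content of the cited pulling/placing triangulation theorem, and once it is invoked the remainder of the proof reduces to the elementary fan combinatorics sketched above.
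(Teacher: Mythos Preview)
Your proposal is correct and aligns with the paper's treatment: the paper does not supply a proof of this lemma at all but merely cites \cite[Theorem III.1.12]{Ewald96} before stating it, so your invocation of that same result is exactly what the author intended. Your added argument for the ``Moreover'' clause via the fan axioms is sound (the only implicit step is that if $F=C_i\cap C_j$ equalled $C_i$ then $C_i$ would be a face of $C_j$ of the same dimension, forcing $C_i=C_j$), and in fact supplies detail the paper omits.
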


The import of Lemma \ref{lem:subdivision} is that it enables us to express a given element in a cone uniquely, although the uniqueness is always understood with respect to an arbitrarily chosen simplicial fan satisfying the conditions of Lemma \ref{lem:subdivision}. Namely any $x\in C$ has the unique expression as a conic combination of the generators for the smallest simplex cone $C_i\in \calF$ with $x\in C_i$, where $\calF$ is any simplicial fan subdividing $C$ in the sense of Lemma \ref{lem:subdivision}.

Recall that by $\hat{v}$ we denote the zero-normalized game \eqref{eq:zeronorm}. In the next result the set of games $\calK(N)$ is any polyhedral cone satisfying the conditions of Theorem \ref{thm:cone}, not necessarily any of the cones $\calG_{\star}(N)$ discussed above. Denote $\calK^0(N)=\calK(N)\cap\calG^0(N)$.
\begin{theorem} \label{thm:cone}
Let $\calK(N)\subseteq \calG(N)$ be a polyhedral cone whose lineality space is $\calG_A(N)$ and assume that  $\sigma$ is an additive and positively homogeneous solution on $\calK(N)$. Let $\calB= \{v_1,\dots,v_k\}$ be a set of generators of $\calK^0(N)$ and put  $Z= \{1,\dots,k,k+1\}$, $\bar{\calB}= \{c_i v_i \mid v_i \in \calB, c_i\geq 0,i=1,\dots,k\}$. Further, let $\calF$ be a simplicial fan whose support is $\calK^0(N)$.

  For any $v\in\calK(N)$, let $\tau(v)= (a_1v_1,\dots,a_kv_k,v-\hat{v})$, where $\sum_{i=1}^k a_i v_i$ is the unique conic combination expressing $\hat{v}$ with respect to $\calF$. Put
\[
\alpha(\omega) = \sum_{i\in Z}c_i\cdot \sigma(w_i), \qquad \omega=(c_i w_i)_{i\in Z}\in (\bar{\calB}\cup \calG_A(N))^Z, \, c_i\geq 0.
\]
Then this diagram commutes:
\begin{equation}\label{diagram:pro2}
   \begin{tikzcd}
  \calK(N) \arrow[rd,"\sigma"] \arrow[r, "\tau"] & (\bar{\calB}\cup \calG_A(N))^Z \arrow[d,"\alpha"] \\
   & \frakP(\dR^n)
     \end{tikzcd}
\end{equation}
\end{theorem}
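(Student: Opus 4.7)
The plan is to verify that the diagram \eqref{diagram:pro2} commutes pointwise by decomposing an arbitrary $v \in \calK(N)$ and then showing that the additive and positively homogeneous structure of $\sigma$ collapses $\alpha(\tau(v))$ to $\sigma(v)$.

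First I would confirm that $\tau$ is well-defined. The splitting $v = \hat v + (v - \hat v)$ with $\hat v \in \calG^0(N)$ and $v - \hat v \in \calGA(N)$ is provided by \eqref{eq:zeronorm}. Since $\calGA(N)$ is the lineality space of $\calK(N)$, we have $\calGA(N) \subseteq \calK(N) \cap (-\calK(N))$, and hence $\calK(N) + \calGA(N) = \calK(N)$; in particular $\hat v = v - (v - \hat v) \in \calK(N)$, so $\hat v \in \calK^0(N)$. Exactly the reasoning used for \eqref{eq:dirdecomp} in Section \ref{sec:coalgames} then shows that $\calK^0(N)$ is a pointed polyhedral cone, and Lemma \ref{lem:subdivision} yields a unique smallest simplex cone $C \in \calF$ containing $\hat v$. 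Inside $C$ the element $\hat v$ admits a unique conic expression in the generators of $C$; extending by zero over the remaining elements of $\calB$ produces the coefficients $a_1, \dots, a_k \geq 0$ that appear in $\tau(v)$.

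Next I would unfold $\alpha(\tau(v))$ and collapse it. By the definitions of $\tau$ and $\alpha$,
\[
\alpha(\tau(v)) = \sum_{i=1}^{k} a_i \cdot \sigma(v_i) \;+\; \sigma(v - \hat v).
\]
Positive homogeneity turns each $a_i \cdot \sigma(v_i)$ with $a_i > 0$ into $\sigma(a_i v_i)$, iterated additivity gathers the first $k$ summands into $\sigma\bigl(\sum_{i=1}^{k} a_i v_i\bigr) = \sigma(\hat v)$, and one final application of additivity gives $\sigma(\hat v) + \sigma(v - \hat v) = \sigma(\hat v + (v - \hat v)) = \sigma(v)$. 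A small side remark handles the case $a_i = 0$: additivity forces $\sigma(\mathsf{0}) = \{0\}$, which agrees with the convention $0 \cdot \sigma(v_i) = \{0\}$.

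The main obstacle is really the first step: confirming that the zero-normalization $\hat v$ returns to the cone $\calK^0(N)$, which is exactly what makes Lemma \ref{lem:subdivision} applicable and the coefficients $a_i$ unambiguously specified by $v$ and $\calF$. Without the lineality-space hypothesis on $\calK(N)$, the tuple $\tau(v)$ would not even land in $(\bar{\calB} \cup \calGA(N))^Z$. Once this structural point is in place, the collapse through additivity and positive homogeneity is essentially forced, using no arguments beyond those already employed in Section \ref{sec:coalgames} and in the proof of Proposition \ref{pro:lin}.
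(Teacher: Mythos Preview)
Your proposal is correct and follows essentially the same route as the paper: verify that $\tau$ is well defined via the decomposition $v=\hat v+(v-\hat v)$ and Lemma~\ref{lem:subdivision}, then collapse $\alpha(\tau(v))=\sum_{i=1}^k a_i\cdot\sigma(v_i)+\sigma(v-\hat v)$ to $\sigma(\hat v)+\sigma(v-\hat v)=\sigma(v)$ using additivity and positive homogeneity. Your treatment of well-definedness is in fact more explicit than the paper's; the only quibble is the side remark that ``additivity forces $\sigma(\mathsf{0})=\{0\}$'', which is not literally true from additivity alone (one gets only $\sigma(\mathsf{0})+\sigma(\mathsf{0})=\sigma(\mathsf{0})$), but the paper glosses over the $a_i=0$ case entirely, so this does not set your argument below the original.
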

\begin{proof}
The definition of $\tau$ is sensible by Lemma \ref{lem:subdivision}. We need to verify that \eqref{diagram:pro2} is commutative. Let $v\in \calK(N)$. Then
\[
\alpha(\tau(v))=\alpha((a_1v_1,\dots,a_kv_k,v-\hat{v}))=\sum_{i=1}^k a_i\cdot \sigma(v_i) + \sigma(v-\hat{v}).
\]
By additivity and positive homogeneity of $\sigma$,
\[
\sum_{i=1}^k a_i\cdot \sigma(v_i) = \sigma (\sum_{i=1}^k a_i v_i)=\sigma(\hat{v}).
\]
Thus, $\alpha(\tau(v))=\sigma(\hat{v})+\sigma(v-\hat{v})=\sigma(v)$.
\qed
\end{proof}
It is sensible to apply Theorem \ref{thm:cone} to any cone $\calK(N)$ that satisfies the conditions above and, at the same time, whose structure of extreme rays is known or for which extremality of a given game in the cone is not too difficult to check. An example thereof is the cone of supermodular games \cite{StudenyKroupa16} and there are strong indications that also the cone of exact games is amenable to such a description; see \cite{KroupaStudeny18} for the details.

\subsection{Nonadditive decomposition of core}\label{subsec}
In this section the linear space of all games $\calG(N)$ is considered with the lattice order given by the pointwise supremum $\vee$ and the pointwise infimum $\wedge$. We will make use of the following nonadditive representation of the core solution, which is mentioned in \cite{llerenaRafels06}.
\begin{lemma}\label{pro:coreint}
If $v_1,\dots,v_k\in \calG(N)$ are games satisfying the condition $v_1(N)=\dots= v_k(N)$, then $\calC(\bigvee_{i=1}^k v_i)=\bigcap_{i=1}^k \calC(v_i)$.
\end{lemma}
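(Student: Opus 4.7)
The plan is to write $w = \bigvee_{i=1}^k v_i$ and prove the two inclusions directly from the definition of the core. The crucial preliminary observation is that, thanks to the hypothesis $v_1(N)=\dots=v_k(N)$, the pointwise supremum preserves the grand-coalition value:
\[
w(N) = \max_{1\leq i\leq k} v_i(N) = v_1(N).
\]
Without this hypothesis the forward inclusion would fail, since one would only get $x(N)=w(N)\geq v_i(N)$.

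For the inclusion $\calC(w)\subseteq \bigcap_{i=1}^k \calC(v_i)$, I would take $x\in\calC(w)$ and fix $i\in\{1,\dots,k\}$. Efficiency gives $x(N)=w(N)=v_i(N)$ by the observation above. Coalitional rationality in $w$ together with $w(A)=\max_j v_j(A)\geq v_i(A)$ yields $x(A)\geq v_i(A)$ for every $A\subseteq N$. Hence $x\in\calC(v_i)$.

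For the reverse inclusion $\bigcap_{i=1}^k \calC(v_i)\subseteq \calC(w)$, I would take $x$ lying in every $\calC(v_i)$. Then $x(N)=v_1(N)=w(N)$, and for any $A\subseteq N$ the inequality $x(A)\geq v_i(A)$ holds for each $i$, so $x(A)\geq \max_i v_i(A)=w(A)$. Therefore $x\in\calC(w)$.

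No real obstacle is expected; the argument is a matter of unfolding definitions once the shared grand-coalition value has been highlighted. The only place where one might slip is in forgetting to verify the efficiency equality $x(N)=w(N)$, which is precisely where the hypothesis $v_1(N)=\dots=v_k(N)$ is invoked.
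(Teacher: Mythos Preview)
Your proposal is correct and essentially identical to the paper's own proof: both set $w=\bigvee_i v_i$, note that the hypothesis forces $w(N)=v_i(N)$, and then verify the two inclusions by checking efficiency and coalitional rationality directly from the definition of the core. There is no substantive difference in approach.
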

\begin{proof}
Put $v= \bigvee_{i=1}^k v_i$. Let $x\in \calC(v)$. For every $i=1,\dots,k$ and all $A\subseteq N$, the conditions  $x(N)=v(N)=v_i(N)$ and $x(A) \geq v(A)\geq v_i(A)$ are satisfied, which gives $x\in \bigcap_{i=1}^k \calC(v_i)$. 
Conversely, assume that $x\in \bigcap_{i=1}^k \calC(v_i)$. Then $x(N)=v_i(N)=v(N)$ for any $i=1,\dots,k$. Let $A\subseteq N$. Since $x(A)\geq v_i(A)$ for all $i=1,\dots,k$ and $v$ is the supremum of games $v_1,\dots,v_k$, it follows that $x(A)\geq v(A)$. Hence, $x\in \calC(v)$. \qed
\end{proof}

From now one we focus on the cone of weakly superadditive games $\calG_{\mathrm{WS}}(N)$. Clearly, $\calG_{\mathrm{WS}}^0(N)=\calGZM(N)$ and then \eqref{eq:dirdecomp} gives
\[
\calG_{\mathrm{WS}}(N)=\calGZM(N) + \calGA(N).
\]
We will base our representation on the polyhedral cone $\calGTM^0(N)$ of zero-normalized totally monotone capacities. Observe that every game $v\in \calGTM^0(N)$ is even monotone. Indeed, let $A\subseteq B\subseteq N$. It is enough to prove $v(A)\leq v(B)$ in case that $|B|=|A|+1$. Let $B=A\cup \{i\}$, where $i\notin N\setminus A$. Then
\[
v(B)=v(A\cup \{i\}) \geq v(A)+v(\{i\})-v(\emptyset)=v(A).
\]
Thus, $\calGTM^0(N)\subseteq \calGZM(N)$.
It is easy to show that the polyhedral cone $\calGTM^0(N)$ is pointed and the generators of its extreme rays are the unanimity games $u_B$ for all $B\subseteq N$ such that $|B|\geq 2$ (see, e.g., \cite[Theorem 2.58]{Grabisch16}). Thus, every game $v\in \calGTM^0(N)$ is \emph{almost positive}, which means that there exist $\lambda_A\geq 0$, where $A \subseteq N$ and $|A| \geq 2$, such that
\[
v=\sum_{\substack{A \subseteq N\\ |A| \geq 2}} \lambda_A\cdot u_A.
\]
For any $v\in\calG(N)$ and every nonempty $B\subseteq N$ we define a game
\begin{equation}\label{def:vB}
v^B= v(B)\cdot u_B + (v(N)-v(B))\cdot u_N.
\end{equation}
The max-decomposition of an arbitrary coalitional game was proved in \cite{llerenaRafels06}. Herein  a simple proof of the same result is provided in our special setting of zero-normalized games.

\begin{lemma}\label{lem:WSdecomp}
The following hold true for any $v\in\calGZM(N)$:
\begin{enumerate}
\item $v= \bigvee_{\emptyset \neq B\subseteq N} v^B$.
\item $v^B\in\calGTM^0(N)$ and $v^B(N)=v(N)$, for all nonempty $B\subseteq N$.
\end{enumerate}
Conversely, let $v_1,\dots,v_k\in \calGTM^0(N)$ be such that $v_1(N)=\dots =v_k(N)$ and define $w= \bigvee_{i=1}^k v_i$. Then $w\in\calGZM(N)$ and $v_i(N)=w(N)$ for all $i=1,\dots,k$.
\end{lemma}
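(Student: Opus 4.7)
The plan is to verify the two parts of the forward direction separately, then handle the converse by noting that the relevant properties of $w$ follow pointwise from those of the $v_i$'s.

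For Part 1(2), the aim is to show each $v^B$ is totally monotone and zero-normalized, with $v^B(N)=v(N)$. The last identity is immediate from $u_B(N)=u_N(N)=1$. For total monotonicity, I would argue that $v^B$ is a nonnegative conic combination of unanimity games $u_B$ and $u_N$: the coefficient $v(B)$ is nonnegative since $v$ is zero-monotone and monotonicity together with $v(\emptyset)=0$ gives $v(B)\geq 0$, while $v(N)-v(B)\geq 0$ by monotonicity. Since unanimity games of nonempty coalitions are totally monotone (totally monotone games form a cone generated by the $u_B$), so is $v^B$. For zero-normalization, the only subtle point is the case $|B|=1$, say $B=\{i\}$; then $u_B(\{i\})=1$, but the coefficient $v(\{i\})=0$ by zero-normalization of $v$, so $v^{\{i\}} = v(N)\cdot u_N$, which is zero-normalized. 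In all other cases $u_B(\{j\})=u_N(\{j\})=0$ since $n\geq 2$ and $|B|\geq 2$.

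For Part 1(1), I would fix an arbitrary $A\subseteq N$ and compute $(\bigvee_B v^B)(A) = \max_B v^B(A)$. If $A=N$, then $v^B(N)=v(N)$ for every nonempty $B$, so the supremum equals $v(N)$. If $A\subsetneq N$, then $u_N(A)=0$, hence $v^B(A)=v(B)\cdot \mathbf 1[B\subseteq A]$; taking the maximum over nonempty $B\subseteq A$ and using monotonicity of $v$, the maximum is attained at $B=A$ and equals $v(A)$.

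For Part 2, given $v_1,\dots,v_k\in\calGTM^0(N)$ with a common value at $N$, set $w=\bigvee_{i=1}^k v_i$. Then $w(N)=\max_i v_i(N)=v_i(N)$ for all $i$, as required. Since each $v_i$ is zero-normalized, $w(\{j\})=\max_i v_i(\{j\})=0$ for every $j\in N$. Monotonicity is inherited pointwise from the $v_i$'s, using the inclusion $\calGTM^0(N)\subseteq\calGZM(N)$ established immediately before the statement of the lemma. Combining, $w$ is zero-normalized and monotone, hence $w\in\calGZM(N)$.

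I do not foresee a substantive obstacle; the only place one must be a little careful is the singleton case $|B|=1$ when establishing that $v^B$ is zero-normalized, where one must invoke the zero-normalization hypothesis on $v$ to kill the otherwise non-zero contribution of $u_B$ to the atoms.
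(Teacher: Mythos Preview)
Your proof is correct and follows essentially the same approach as the paper: both establish that $v^B$ is a nonnegative conic combination of $u_B$ and $u_N$ (hence totally monotone), verify $v^B(N)=v(N)$ and $v^B(A)\le v(A)$ for $A\subsetneq N$ via monotonicity, and handle the converse pointwise. You are in fact slightly more careful than the paper in explicitly checking zero-normalization of $v^B$ in the singleton case $|B|=1$.
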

\begin{proof}
Since $v$ is monotone, both $v(B)$ and $v(N)-v(B)$ are nonnegative for every $\emptyset \neq B\subseteq N$. Hence,  $v^B$ is totally monotone since it is a conic combination of unanimity games $u_B$ and $u_N$. It follows from the definition of $v^B$ that $v^B(N)=v(N)$ and $v^B(B)=v(B)$, for all $\emptyset \neq B\subseteq N$. Hence, we only need to show that $v^B(A)\leq v(A)$ for every nonempty $A\subsetneq N$. Let $A\subseteq B$. Then $v^B(A)=v(B)\leq v(A)$, by monotonicity of $v$. If $A \not\subseteq B$, the the same argument shows that  $v^B(A)=0\leq v(A)$.

For the second part of the assertion, observe that $w(\{i\})=0$ for all $i\in N$. Since every game $w_i$ is monotone, it follows that for all $A\subseteq B\subseteq N$,
\[
w(A)=\bigvee_{i=1}^k v_i(A) \leq \bigvee_{i=1}^k v_i(B)=w(B).
\]
Clearly, $v_i(N)=w(N)$ for all $i=1,\dots,k$. \qed
\end{proof}

Combining Lemma \ref{pro:coreint} with Lemma \ref{lem:WSdecomp}, the core of every game $v\in\calGZM(N)$ can be written as
\begin{equation}\label{eq:core}
\calC(v)=\bigcap_{\emptyset \neq B\subseteq N} \calC(v^B),
\end{equation}
where $v^B$ is defined by \eqref{def:vB}. We will prove that the core solution on $\Gamma(N)=\calGWS(N)$ factors through the set $\Omega(N)^Z$, where we put $\Omega(N)=\calGTM^0(N)\cup \calGA(N)$ and $Z=\frakP(N)$.

\begin{theorem}\label{thm:WSmain}
Define the maps
\[
\tau\colon  \calGWS(N) \to (\calGTM^0(N)\cup \calGA(N))^{\frakP(N)}
\]
and 
\[
\alpha\colon (\calGTM^0(N)\cup \calGA(N))^{\frakP(N)} \to \frakP(\dR^n)
\]
as
$\tau(v)= (v-\hat{v},(\hat{v}^B)_{B\in \frakP(N)\setminus \{\emptyset\}})$, for all $v\in  \calGWS(N)$, and
\[
\alpha(\omega)= \calC(\omega_{\emptyset}) + \bigcap_{\substack{B\in \frakP(N)\\ B\neq \emptyset}} \calC(\omega_B), \qquad \omega=(\omega_B)_{B\in\frakP(N)}\in (\calGTM^0(N)\cup \calGA(N))^{\frakP(N)}.
\]
Then this diagram commutes:
\begin{equation*}\label{diagram3}
   \begin{tikzcd}
  \calGWS(N) \arrow[rd,"\calC"] \arrow[r, "\tau"] & (\calGTM^0(N)\cup \calGA(N))^{\frakP(N)} \arrow[d,"\alpha"] \\
   & \frakP(\dR^n)
     \end{tikzcd}
\end{equation*}
\end{theorem}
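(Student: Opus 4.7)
The plan is to reduce everything to two pieces: the additive translate $v-\hat{v}\in\calGA(N)$ and the zero-normalization $\hat{v}$, and then apply the already-established max-decomposition of $\calC$ (Lemma \ref{pro:coreint} combined with Lemma \ref{lem:WSdecomp}) to $\hat{v}$. Unwinding the definitions,
\[
\alpha(\tau(v))=\calC(v-\hat{v}) + \bigcap_{\emptyset\neq B\subseteq N}\calC(\hat{v}^{B}),
\]
so commutativity of the diagram amounts to the identity $\calC(v)=\calC(v-\hat{v})+\calC(\hat{v})$ together with $\calC(\hat{v})=\bigcap_{\emptyset\neq B\subseteq N}\calC(\hat{v}^{B})$. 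To invoke the latter via \eqref{eq:core}, we first need $\hat{v}\in\calGZM(N)$, which is the only non-immediate ingredient.

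First I would verify that $\tau$ really takes values in $(\calGTM^{0}(N)\cup\calGA(N))^{\frakP(N)}$. The component indexed by $\emptyset$ is $v-\hat{v}$, which lies in $\calGA(N)$ by the discussion preceding \eqref{eq:zeronorm}. For the remaining components, I would show that $\hat{v}$ is zero-monotone whenever $v$ is weakly superadditive: zero-normalization is built into the definition of $\hat{v}$, while monotonicity follows from iterating the weak superadditivity inequality $v(A\cup\{i\})\geq v(A)+v(\{i\})$ along any chain from $A$ to $B\supseteq A$, producing $v(B)\geq v(A)+\sum_{i\in B\setminus A}v(\{i\})$, and then subtracting $\sum_{i\in B}v(\{i\})$ from both sides to get $\hat{v}(B)\geq\hat{v}(A)$. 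With $\hat{v}\in\calGZM(N)$ in hand, Lemma \ref{lem:WSdecomp}(2) tells us that each $\hat{v}^{B}\in\calGTM^{0}(N)$, so $\tau$ is well-defined.

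Next I would combine the pieces. Since $\hat{v}\in\calGZM(N)$, equation \eqref{eq:core} gives $\calC(\hat{v})=\bigcap_{\emptyset\neq B\subseteq N}\calC(\hat{v}^{B})$. The remaining identity $\calC(v)=\calC(\hat{v})+\calC(v-\hat{v})$ is the standard covariance of the core under addition of the additive game $m=v-\hat{v}$: for any $x\in\dR^{n}$, the inequalities $x(A)\geq v(A)$ and $(x-e^{-1}(m))(A)\geq\hat{v}(A)=v(A)-m(A)$ are equivalent, as are the efficiency conditions at $N$, so $\calC(v)=\{e^{-1}(m)\}+\calC(\hat{v})$. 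Because $v-\hat{v}$ is additive, $\calC(v-\hat{v})=\{e^{-1}(v-\hat{v})\}$, and this singleton can be substituted for the translate. Stitching everything together,
\[
\alpha(\tau(v))=\calC(v-\hat{v})+\bigcap_{\emptyset\neq B\subseteq N}\calC(\hat{v}^{B})=\calC(v-\hat{v})+\calC(\hat{v})=\calC(v),
\]
which is the required commutativity.

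The only genuine obstacle is confirming that $\hat{v}$ is zero-monotone when $v$ is merely weakly superadditive; everything else is a bookkeeping exercise built on results already proved in the paper (the max-decomposition of Lemma \ref{lem:WSdecomp}, the intersection formula \eqref{eq:core}, and the classical behaviour of the core under additive translation).
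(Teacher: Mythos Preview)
Your proposal is correct and follows essentially the same route as the paper's proof: compute $\alpha(\tau(v))$, apply \eqref{eq:core} to collapse the intersection to $\calC(\hat v)$, and then use covariance of the core under the additive shift $v-\hat v$. The only difference is expository: you re-derive that $\hat v$ is zero-monotone, whereas the paper had already recorded this earlier via the identity $\calGWS^{0}(N)=\calGZM(N)$ together with the decomposition \eqref{eq:dirdecomp}, and you also spell out the well-definedness of $\tau$ and the covariance step in more detail than the paper does.
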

\begin{proof}
Let $v\in \calGWS(N)$. Then
\begin{equation}\label{eq:commute}
\alpha(\tau(v))=\alpha((v-\hat{v},(\hat{v}^B)_{B\in \frakP(N)\setminus \{\emptyset\}}))=\calC(v-\hat{v})+ \bigcap_{\substack{B\in \frakP(N)\\ B\neq \emptyset}} \calC(\hat{v}^B).
\end{equation}
As $\hat{v}$ is zero-normalized we get $\hat{v}=\bigvee_{\emptyset \neq B\subseteq N} \hat{v}^B$ and \eqref{eq:core} says that
\begin{equation}\label{eq:int}
\bigcap_{\substack{B\in \frakP(N)\\ B\neq \emptyset}} \calC(\hat{v}^B)=\calC(\hat{v}).
\end{equation}
Since $v-\hat{v}$ is an additive game, the core $\calC(v-\hat{v})$ is a singleton whose only payoff allocation is the vector $e^{-1}(v-\hat{v})$, where $e^{-1}$ is the linear map \eqref{def:addinv}. As the core solution is covariant under strategic equivalence, combining \eqref{eq:commute} with \eqref{eq:int} yields
\[
\alpha(\tau(v))=\calC(v-\hat{v})+\calC(\hat{v})=\calC(v).
\]
Hence, the diagram commutes.
\qed
\end{proof}

Theorem \ref{thm:WSmain} streamlines the decomposition \eqref{eq:core} into the cores of games $v^B$. Such cores have a very special shape from the viewpoint of the theory of polyhedra. Observe that the core of a unanimity game $u_A$ with $\emptyset\neq A\subseteq N$ is the standard $(|A|-1)$-simplex
\[
\Delta_A = \conv \{\delta^i \mid i\in A\},
\]
where a vector $\delta^i\in\dR^n$ has coordinates
\[
\delta_j^i = \begin{cases}
1 & i=j,\\
0 & i\neq j.
\end{cases}
\]
Using (\ref{def:vB}) we get
\begin{align*}
\calC(v^B)&=v(B)\cdot \calC(u_B) + (v(N)-v(B))\cdot \calC(u_N)\\ & =v(B)\cdot \Delta_B + (v(N)-v(B))\cdot \Delta_N \\
& =\conv \{v(B)\cdot \delta^i + (v(N)-v(B))\cdot \delta^j \mid i\in B, j\in N\}.
\end{align*}
Thus, each core $\calC(v^B)$ is a special weighted Minkowski sum of two standard simplices, which are called \emph{nestohedra} and count among important convex polytopes studied in \cite{Postnikov:Faces08}. Then one of the consequences of Theorem \ref{thm:WSmain} is that the core of every weakly superadditive game is (a possibly translated) intersection \eqref{eq:core} of such nestohedra. It is an interesting open question for further research if the intersection \eqref{eq:core} allows for an alternative characterization, which would capture the class of all polytopes arising as cores of weakly superadditive games.

\end{document}